\DeclareMathOperator*{\Cat}{\mathbf{Cat}}
\newcommand{\eop}{\hfill$\square$}
\theoremstyle{plain}
\newtheorem{Thm}{Theorem}[section]
\newtheorem{Cor}[Thm]{Corollary}
\newtheorem{Lem}[Thm]{Lemma}
\theoremstyle{definition}
\newtheorem{Rem}{Remark}[section]
\theoremstyle{remark}
\begin{document}

\title[Xia Zhou and David M.~Bradley]{On Mordell-Tornheim Sums and Multiple Zeta Values}

\date{\today}

\dedicatory{To Paulo Ribenboim, in honor of his 80th birthday.}

\author{David~M. Bradley}
\address{Department of Mathematics \& Statistics\\
         University of Maine\\
         5752 Neville Hall
         Orono, Maine 04469-5752\\
         U.S.A.}
\email[]{bradley@math.umaine.edu, dbradley@member.ams.org}

\author{Xia Zhou}
\address{Department of Mathematics\\ Zhejiang University\\ Hangzhou,
310027\\ P.\ R.\ China} \email[]{xiazhou0821@hotmail.com}
\thanks{Research of the second author was supported by the National Natural Science Foundation of China, Project
10871169.}

\subjclass{Primary: 11M41; Secondary: 11M06}

\keywords{Tornheim series, Witten zeta function, Euler sums,
multiple harmonic series, multiple zeta values.}

\begin{abstract}
We prove that any Mordell-Tornheim sum with positive integer
arguments can be expressed as a rational linear combination of
multiple zeta values of the same weight and depth.  By a result of
Tsumura, it follows that any Mordell-Tornheim sum with weight and
depth of opposite parity can be expressed as a rational linear
combination of products of multiple zeta values of lower depth.
\end{abstract}

\maketitle

\interdisplaylinepenalty=500

\section{Introduction}
Let $r$ and $w$ be positive integers, and let $s_1,\dots,s_r$ and
$s$ be complex numbers satisfying $s_1+\cdots+s_r+s=w$. A
Mordell-Tornheim sum of depth $r$ and weight $w$ is a multiple
series of the form
\begin{equation}\label{MTdef}
   T(s_1,\dots,s_r;s) := \sum_{m_1=1}^\infty \cdots \sum_{m_r=1}^\infty
   \frac{1}{m_1^{s_1}\cdots m_r^{s_r}(m_1+\cdots+m_r)^s}.
\end{equation}
Denote the real part of $s$ by $\sigma$, and the real part of $s_j$
by $\sigma_j$ for $1\le j\le r$.  Since~\eqref{MTdef} remains
unchanged if the arguments $s_1,\dots,s_r$ are permuted, we may as
well suppose that they are arranged in order of increasing real
part.  Then $\sigma_1\le \sigma_2\le\cdots\le \sigma_r$, and by
Theorem~\ref{thm:MTAC} below, the series~\eqref{MTdef} is absolutely
convergent if
\[
   \sigma+\sum_{j=1}^k \sigma_j > k
\]
for each $k=1,2,\dots,r$. We call~\eqref{MTdef} a Mordell-Tornheim
zeta value in the case when the arguments are all integers. These
were first investigated by Tornheim~\cite{Torn} in the case $r=2$,
and later by Mordell~\cite{Mord} and Hoffman~\cite{Hoff} with
$s_1=\cdots=s_r=1$.

%

Of greater theoretical importance are the so-called multiple zeta
series of depth $r$ and weight $w=s_1+\cdots+s_r$ of the form
\begin{equation}\label{MZVdef}
   \zeta(s_1,\dots,s_r) := \sum_{n_1>\cdots>n_r>0}\; \prod_{j=1}^r
   n_j^{-s_j},
\end{equation}
in which the sum is over all positive integers $n_1,\dots, n_r$ such
that $n_j>n_{j+1}$ for $1\le j\le r-1$.   By Theorem~\ref{thm:MZVAC}
below the series~\eqref{MZVdef} is absolutely convergent if the
partial sums of the real parts of the arguments satisfy
\[
   \sum_{j=1}^k \Re(s_j)>k
\]
for each $k=1,2,\dots,r$.  If $s_1,\dots,s_r$ are all integers,
then~\eqref{MZVdef} is called a multiple zeta value of depth $r$ and
weight $s_1+\cdots+s_r$.  Multiple zeta values have been studied
extensively;
see~\cite{DJBor,GoldBach,BBB,BBBLc,BBBLa,BowBradSurvey,BowBrad3,BowBrad1,BowBradRyoo,Prtn,DBqMzv,qEulerDecomp,qSum}
for example.

In this paper, we show how to express an \emph{arbitrary}
Mordell-Tornheim zeta value in terms of multiple zeta values of the
same weight and depth.  More precisely, we shall prove the following
result.

\begin{Thm}\label{thm:MTtoMZV}
  Every Mordell-Tornheim zeta value of depth $r$ and weight $w$ can be expressed as a rational linear
  combination of multiple zeta values of depth $r$ and weight $w$.
\end{Thm}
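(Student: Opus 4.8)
The plan is to reduce everything to a single two-variable partial fraction identity together with an induction on the depth $r$. The identity I will use is that for positive integers $a,b$ and commuting indeterminates $x,y$,
\[
  \frac{1}{x^a y^b}=\sum_{i=1}^{a}\binom{a+b-i-1}{b-1}\frac{1}{(x+y)^{a+b-i}\,x^i}
                 +\sum_{j=1}^{b}\binom{a+b-j-1}{a-1}\frac{1}{(x+y)^{a+b-j}\,y^j},
\]
which follows by induction on $a+b$ (the case $a=b=1$ being $1/(xy)=1/((x+y)x)+1/((x+y)y)$). Its two crucial features are that every term on the right involves the \emph{sum} $x+y$ together with exactly one of $x,y$, and that the total degree $a+b$ is preserved; thus it trades a pair of independent denominators for denominators carrying a partial sum, without changing the weight.

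Next, in \eqref{MTdef} I group terms according to the value of the total $N=m_1+\cdots+m_r$, writing $T(s_1,\dots,s_r;s)=\sum_{N\ge r}N^{-s}\,\Sigma_r(N)$, where $\Sigma_r(N)=\sum_{m_1+\cdots+m_r=N}m_1^{-s_1}\cdots m_r^{-s_r}$ is a sum over a discrete simplex; all rearrangements are justified by absolute convergence. The heart of the matter is a lemma, proved by induction on $r$, asserting that $\Sigma_r(N)$ is a rational linear combination of terms $N^{-c}\,\zeta_N(a_1,\dots,a_{r-1})$ of total weight $s_1+\cdots+s_r$, where $\zeta_N(a_1,\dots,a_{r-1}):=\sum_{N>n_1>\cdots>n_{r-1}>0}n_1^{-a_1}\cdots n_{r-1}^{-a_{r-1}}$ is a truncated multiple zeta value of depth $r-1$. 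Granting this, multiplying by $N^{-s}$ and summing over $N$ prepends one more index and produces honest multiple zeta values $\zeta(s+c,a_1,\dots,a_{r-1})$ of depth $r$ and weight $w$, which is the theorem. The base case $r=1$ is $\Sigma_1(N)=N^{-s_1}$, and the case $r=2$ comes directly from applying the identity to $m_1^{-s_1}(N-m_1)^{-s_2}$ and summing over $m_1$, each term collapsing to a truncated depth-one sum $\zeta_N(i)=\sum_{l<N}l^{-i}$.

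For the inductive step I would peel off $m_1$, apply the inductive hypothesis to $\Sigma_{r-1}(N-m_1;s_2,\dots,s_r)$, and then apply the identity to $m_1^{-s_1}(N-m_1)^{-c}$ with $x=m_1$, $y=N-m_1$, $x+y=N$. The terms retaining $N-m_1$ are benign: after $l=N-m_1$ the constraints become $N>l>n_1>\cdots$, extending the ordered chain and assembling into truncated multiple zeta values of depth $r-1$ as required. The terms retaining $m_1$ are the main obstacle: they read $\sum_{m_1}m_1^{-\alpha}\zeta_{N-m_1}(a_1,\dots,a_{r-2})$, in which the truncation $N-m_1>n_1$ imposes a single Tornheim-type constraint $m_1+n_1<N$ coupling $m_1$ to the top of the remaining chain, so the summand is not yet an ordered sum. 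I would resolve this by one further application of the identity to $m_1^{-\alpha}n_1^{-a_1}$, introducing the partial sum $p=m_1+n_1$: one group of terms becomes totally ordered, $N>p>n_1>n_2>\cdots$, while the other carries a single coupling of exactly the same shape, now $m_1+n_2<p$, bounded by $p$ instead of $N$. Thus the lone coupling marches strictly downward through the chain and, the chain being finite, disappears after at most $r-1$ further steps, every surviving term being a single totally ordered sum. The points requiring care — and the real content of the argument — are that exactly one Tornheim constraint is present at each stage (so no genuinely multivariable obstruction accumulates), that its depth strictly increases (forcing termination), and that each terminal term is a \emph{single} totally ordered sum of depth exactly $r-1$, never a product nor a sum of lower depth; these three facts are precisely what pin the final answer to depth-$r$, weight-$w$ multiple zeta values.
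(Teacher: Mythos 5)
Your argument is correct, but it takes a genuinely different route from the paper's. The paper's key tool (Lemma~\ref{lem:parfrac}) is a full $r$-variable partial fraction decomposition, obtained by applying the operator $\prod_{n}\frac{1}{(s_n-1)!}(-\partial/\partial x_n)^{s_n-1}$ to the trivial identity $\prod_{j}x_j^{-1}=\sum_j x^{-1}\prod_{k\ne j}x_k^{-1}$; your key tool is only the classical two-variable identity, which is precisely the $r=2$ case of that lemma (your binomial form matches it after reindexing). Correspondingly, the inductions are organized differently: the paper introduces the hybrid infinite series $T_l(s_1,\dots,s_r)=\sum\prod_{k\le l}m_k^{-s_k}\prod_{k>l}n_k^{-s_k}$ with $n_k=m_1+\cdots+m_k$, and a single application of the $r$-variable lemma converts $T_l$ into a rational combination of $T_{l-1}$'s --- all couplings handled at once, with explicit multinomial coefficients --- the recursion ending at $T_1=\zeta(s_r,\dots,s_1)$; you instead fix $N=m_1+\cdots+m_r$, induct on depth through the finite truncated sums $\zeta_N$, and repair the single surviving Tornheim coupling by letting it march down the chain. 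Your approach buys elementarity and the comfort of finite sums: no convergence question arises until the final summation over $N$, whereas the paper's intermediate $T_l$ are themselves infinite multiple series whose absolute convergence is tacitly used to justify the rearrangements in~\eqref{step1} and~\eqref{step2}. The cost is a more intricate double induction: what the paper's multivariable lemma does in one stroke, your marching argument does in up to $r-1$ further applications per term, and the burden of checking that exactly one coupling survives each step falls on you (your verification of this --- ordered terms split off, the coupling descends one position, and it dies at the bottom of the chain --- is sound). Both proofs ultimately rest on the same two invariants, which you correctly isolate: each partial fraction step trades an independent denominator for a partial-sum denominator while preserving the total weight and the number of summation variables, and these two facts are what force the output to consist of depth-$r$, weight-$w$ multiple zeta values and nothing else.
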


Theorem~\ref{thm:MTtoMZV} shows that the study of Mordell-Tornheim
zeta values reduces to the study of multiple zeta values. For
example, the following parity result is an immediate consequence of
Theorem~\ref{thm:MTtoMZV} and the corresponding parity result for
multiple zeta values due to Tsumura~\cite{Tsu02} and for which an
independent proof is given in~\cite{IKZ}.

\begin{Cor}\label{cor:MTtoMZVred}
  Every Mordell-Tornheim zeta value of depth at least 2 and with weight
  and depth of opposite parity can be expressed as a rational linear
  combination of products of multiple zeta values of lower depth.
\end{Cor}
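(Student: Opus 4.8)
The plan is to obtain Corollary~\ref{cor:MTtoMZVred} by composing Theorem~\ref{thm:MTtoMZV} with the parity theorem for multiple zeta values. Fix a Mordell-Tornheim zeta value $T=T(s_1,\dots,s_r;s)$ of depth $r\ge 2$ and weight $w$, and suppose that $w$ and $r$ have opposite parity, that is, $w+r$ is odd. First I would invoke Theorem~\ref{thm:MTtoMZV} to write
\[
   T = \sum_i c_i\,\zeta(\mathbf{s}_i), \qquad c_i\in\Q,
\]
where each $\mathbf{s}_i=(s_{i,1},\dots,s_{i,r})$ is a composition of $w$ into exactly $r$ positive integer parts; in other words, every multiple zeta value appearing on the right has depth $r$ and weight $w$.

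The crucial observation is that the parity hypothesis is inherited termwise. Since Theorem~\ref{thm:MTtoMZV} preserves both depth and weight, every $\zeta(\mathbf{s}_i)$ in the sum again has weight $w$ and depth $r$ with $w+r$ odd. Hence each of these multiple zeta values satisfies the hypothesis of the parity theorem of Tsumura~\cite{Tsu02} (see also~\cite{IKZ}), which asserts that any multiple zeta value whose weight and depth are of opposite parity is a rational linear combination of products of multiple zeta values of strictly smaller depth. Applying this to each $\zeta(\mathbf{s}_i)$ and substituting back into the displayed expression for $T$, I would collect the resulting double sum into a single rational linear combination of products of multiple zeta values, every factor of which has depth less than $r$. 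This yields the asserted representation of $T$.

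The argument is essentially a two-step composition and presents no serious computational obstacle; the only point requiring care is verifying that the opposite-parity condition propagates to every term produced by Theorem~\ref{thm:MTtoMZV}. This is immediate here, because that theorem keeps both the weight and the depth fixed, so the condition $w+r\equiv 1\pmod 2$ passes verbatim to each $\zeta(\mathbf{s}_i)$. Finally, the hypothesis $r\ge 2$ is exactly what is needed for the phrase ``lower depth'' to have content, since a depth-one multiple zeta value $\zeta(w)$ admits no such reduction.
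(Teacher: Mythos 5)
Your proposal is correct and follows exactly the paper's own route: invoke Theorem~\ref{thm:MTtoMZV} to expand the Mordell-Tornheim value as a rational linear combination of multiple zeta values of the same weight $w$ and depth $r$, then apply the parity theorem of Tsumura~\cite{Tsu02} (Theorem~\ref{thm:MZVtoMZVred}) termwise, noting as you do that the opposite-parity condition passes to each term since weight and depth are preserved. The paper treats this as an immediate two-step composition (and also remarks that one could instead use Tsumura's Mordell-Tornheim parity result, Theorem~\ref{thm:MTtoMTred}, in place of the multiple zeta value version), so your argument matches the intended proof.
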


We note that the case $r=2$ of Theorem~\ref{cor:MTtoMZVred} was
proved by Tornheim~\cite{Torn}.  Explicit formulas for Tornheim's
reduction were given in~\cite{HWZ}; see also~\cite{ZhouCaiBradley1}.

\section{Convergence Criteria}
\begin{Thm}\label{thm:MZVAC}
  Let $r$ be a positive integer, and let $s_1,\dots,s_r$ be complex numbers with respective
  real parts $\sigma_1,\dots,\sigma_r$.  The multiple zeta series~\eqref{MZVdef} is absolutely convergent if for each
  positive integer $k$ such that $1\le k\le r$, the inequality
  \[
     \sum_{j=1}^k \sigma_j>k
  \]
  holds.
\end{Thm}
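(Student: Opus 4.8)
The plan is to pass to absolute values and then argue by induction on the depth $r$. Because every term of $\sum \prod_{j} n_j^{-s_j}$ has modulus $\prod_{j=1}^r n_j^{-\sigma_j}\ge 0$, Tonelli's theorem for series of nonnegative terms lets me sum in any order, so it suffices to show that
\[
   \Sigma_r(\sigma_1,\dots,\sigma_r):=\sum_{n_1>\cdots>n_r>0}\ \prod_{j=1}^r n_j^{-\sigma_j}
\]
is finite whenever $\sum_{j=1}^k\sigma_j>k$ for every $k=1,\dots,r$. The base case $r=1$ is the elementary convergence of $\zeta(\sigma_1)$ for $\sigma_1>1$.

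The one tool I would isolate first is an elementary bound for a single partial sum: for every real $a$ and every integer $N\ge 1$,
\[
   \sum_{m=1}^{N} m^{-a}\ \le\
   \begin{cases}
      \zeta(a), & a>1,\\
      1+\log N, & a=1,\\
      C_a\,N^{1-a}, & a<1,
   \end{cases}
\]
each inequality following by comparison of the sum with the integral of $x^{-a}$; moreover $1+\log N\le C_\delta N^{\delta}$ for any $\delta>0$.

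For the inductive step ($r\ge 2$) I would fix $n_1>\cdots>n_{r-1}$ and sum the innermost variable $n_r$ over $1\le n_r<n_{r-1}$, enlarging the range to $1\le n_r\le n_{r-1}$ to obtain an upper bound. Applying the single-sum estimate with $N=n_{r-1}$ and $a=\sigma_r$ produces three cases, in each of which $\Sigma_r$ is bounded by a constant times a depth $(r-1)$ sum $\Sigma_{r-1}(\tilde\sigma_1,\dots,\tilde\sigma_{r-1})$: if $\sigma_r>1$ the factor is the constant $\zeta(\sigma_r)$ and $\tilde\sigma_j=\sigma_j$; if $\sigma_r<1$ only the last exponent changes, to $\tilde\sigma_{r-1}=\sigma_{r-1}+\sigma_r-1$; and if $\sigma_r=1$ the last exponent becomes $\tilde\sigma_{r-1}=\sigma_{r-1}-\delta$ for a small $\delta>0$ to be chosen. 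The crux is then to verify that the reduced exponents still satisfy the hypotheses of the theorem in depth $r-1$, namely $\sum_{j=1}^k\tilde\sigma_j>k$ for $k=1,\dots,r-1$. For $k\le r-2$ nothing changes and the old inequalities persist; for $k=r-1$ one computes $\sum_{j=1}^{r-1}\tilde\sigma_j=\sum_{j=1}^{r-1}\sigma_j$ in the case $\sigma_r>1$, or $\sum_{j=1}^{r-1}\tilde\sigma_j=\sum_{j=1}^{r}\sigma_j-1>r-1$ in the case $\sigma_r<1$ (using the depth-$r$ hypothesis at $k=r$), or $\sum_{j=1}^{r-1}\tilde\sigma_j=\sum_{j=1}^{r-1}\sigma_j-\delta$ in the case $\sigma_r=1$, which exceeds $r-1$ once $\delta$ is chosen smaller than the strictly positive slack $\sum_{j=1}^{r-1}\sigma_j-(r-1)$ guaranteed by the depth-$r$ hypothesis at $k=r-1$. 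The inductive hypothesis then gives $\Sigma_{r-1}(\tilde\sigma_1,\dots,\tilde\sigma_{r-1})<\infty$, whence $\Sigma_r<\infty$.

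The only delicate point is the borderline case $\sigma_r=1$, where summing the innermost variable costs a logarithmic factor; the strict inequalities in the hypotheses provide exactly the positive margin needed to absorb it into an arbitrarily small power of $n_{r-1}$. Apart from this bookkeeping, the argument is a routine comparison of sums with integrals, propagated through the induction.
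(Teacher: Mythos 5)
Your proposal is correct and follows essentially the same route as the paper: induction on the depth, summing the innermost variable first, splitting on whether $\sigma_r<1$, $=1$, or $>1$, and absorbing the logarithmic factor in the borderline case into a small power $n_{r-1}^{\delta}$ using the strict slack in the hypothesis at $k=r-1$. The only cosmetic difference is that the paper merges your cases $\sigma_r=1$ and $\sigma_r>1$ into a single case $\sigma_r\ge 1$ (bounding the inner sum by $\log n_{r-1}$ in both), and invokes the Euler--Maclaurin formula where you use direct comparison with integrals.
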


\noindent{\bf Proof.}  The case $r=1$ is a familiar consequence of
the integral test from calculus.  Let $d$ be a positive integer, and
let $s_1,s_2,\dots,s_{d+1}$ be complex numbers with respective real
parts $\sigma_1,\sigma_2,\dots,\sigma_{d+1}$. First, suppose that
$\sigma_{d+1}<1$.  The Euler-Maclaurin sum formula implies that
\begin{equation}\label{MZVACcase1}
   \sum_{n_1>\cdots>n_{d+1}>0}\;\bigg|\prod_{j=1}^{d+1}
   n_j^{-s_j}\bigg|
   \ll \sum_{n_1>\cdots>n_d>0}\; n_d^{1-\sigma_{d+1}}\prod_{j=1}^d
   n_j^{-\sigma_j}.
\end{equation}
By induction, the series on the right hand side
of~\eqref{MZVACcase1} converges if for each positive integer $k$
such that $1\le k\le d-1$,
\[
   \sum_{j=1}^k \sigma_j > k  \quad\mbox{and}\quad
   \bigg[(\sigma_d+\sigma_{d+1}-1)+\sum_{j=1}^{d-1} \sigma_j  >d
   \quad\Longleftrightarrow\quad \sum_{j=1}^{d+1} \sigma_j > d+1\bigg].
\]
Therefore, the series obtained by removing the absolute value bars
from the series on the left hand side of~\eqref{MZVACcase1} is
absolutely convergent \emph{a fortiori} if for each positive integer
$k$ such that $1\le k\le d+1$,
\[
   \sum_{j=1}^k \sigma_j > k.
\]

Now suppose that $\sigma_{d+1}\ge 1$ and that
\[
   \sum_{j=1}^k \sigma_j > k
\]
for every positive integer $k$ such that $1\le k\le d+1$. Let
$\varepsilon>0$ be defined by the equation
\[
   \sum_{j=1}^d \sigma_j =d+2\varepsilon.
\]
The Euler-Maclaurin sum formula implies that
\begin{equation}\label{MZVACcase2}
   \sum_{n_1>\cdots>n_{d+1}>0}\;\bigg|\prod_{j=1}^{d+1} n_j^{-s_j}\bigg|
   \ll \sum_{n_1>\cdots>n_d>0}\; (\log n_d)\prod_{j=1}^d  n_j^{-\sigma_j}
   \ll \sum_{n_1>\cdots>n_d>0}\; n_d^{-(\sigma_d-\varepsilon)}\prod_{j=1}^{d-1}
   n_j^{-\sigma_j}.
\end{equation}
By induction, the series on the right hand side
of~\eqref{MZVACcase2} converges because for each positive integer
$k$ such that $1\le k\le d-1$,
\[
   \sum_{j=1}^k \sigma_j >k \quad\mbox{and}\quad
   (\sigma_d-\varepsilon)+\sum_{j=1}^{d-1}\sigma_j = d+\varepsilon
   >d.
\]
Therefore, the series obtained by removing the absolute value bars
from the series on the left hand side of~\eqref{MZVACcase2} is
absolutely convergent. \eop

\begin{Rem}  The condition for absolute convergence of~\eqref{MZVdef} is
incorrectly stated in~\cite{Zhao} as
\[
   \sigma_1>1\quad\mbox{and}\quad\sum_{j=1}^r \sigma_j > r.
\]
For a counterexample, these inequalities are satisfied if
$s_1=s_3=2$ and $s_2=0$, but
\[
   \sum_{n_1>n_2>n_3>0}\; n_1^{-2} n_3^{-2} = \sum_{n_1=1}^\infty
   \frac{1}{n_1^2}\sum_{n_2=1}^{n_1-1}\,\sum_{n_3=1}^{n_2-1}
   \frac{1}{n_3^2} \ge \sum_{n=3}^\infty
   \frac{1}{n^2}\sum_{k=2}^{n-1} 1 = \sum_{n=3}^\infty
   \frac{n-2}{n^2} = \infty.
\]
Sufficient conditions for absolute convergence of a more general
class of multiple Dirichlet series is given in~\cite{Kohji}, but the
proof takes 4 pages.
\end{Rem}

\begin{Thm}\label{thm:MTAC}
  Let $r$ be a positive integer, and let $s_1,\dots,s_r$ be complex
  numbers arranged so that their respective real parts
  $\sigma_1,\dots,\sigma_r$ satisfy $\sigma_j\le \sigma_{j+1}$ for
  $1\le j\le r-1$.  Let $s$ be a complex number with real part
  $\sigma$.  If for each positive integer $k$ such that $1\le k\le
  r$, the inequality
  \[
     \sigma+\sum_{j=1}^k \sigma_j >k
  \]
  holds, then the Mordell-Tornheim series~\eqref{MTdef} is
  absolutely convergent.
\end{Thm}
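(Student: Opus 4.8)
The plan is to reduce the absolute convergence of the Mordell--Tornheim series to that of finitely many ordinary multiple zeta series, for which Theorem~\ref{thm:MZVAC} already supplies a sufficient criterion. Since only absolute convergence is at issue, I would first replace every exponent by its real part and work with the nonnegative series
\[
   \sum_{m_1,\dots,m_r\ge 1}\frac{1}{m_1^{\sigma_1}\cdots m_r^{\sigma_r}(m_1+\cdots+m_r)^{\sigma}}.
\]
The central observation is that on any region where one variable dominates, the factor $(m_1+\cdots+m_r)^{-\sigma}$ behaves like a power of that dominant variable, which converts the Mordell--Tornheim summand into a multiple-zeta summand.

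To make this precise, I would decompose the summation region $\{(m_1,\dots,m_r):m_j\ge 1\}$ as the union, over all $r!$ permutations $\pi$ of $\{1,\dots,r\}$, of the weakly ordered cones $\{m_{\pi(1)}\ge m_{\pi(2)}\ge\cdots\ge m_{\pi(r)}\ge 1\}$. Because the summand is nonnegative and every tuple lies in at least one such cone, the full sum is bounded above by the sum of the contributions of the individual cones. On the cone indexed by $\pi$, the largest variable is $N_1:=m_{\pi(1)}=\max_j m_j$, and since $N_1\le m_1+\cdots+m_r\le rN_1$, one has $(m_1+\cdots+m_r)^{-\sigma}\ll N_1^{-\sigma}$ with an implied constant depending only on $r$ and $\sigma$ (in either sign of $\sigma$). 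Writing $N_i:=m_{\pi(i)}$, the contribution of the cone is therefore, up to a constant, at most
\[
   \sum_{N_1\ge\cdots\ge N_r\ge 1} N_1^{-(\sigma+\sigma_{\pi(1)})}\prod_{i=2}^{r} N_i^{-\sigma_{\pi(i)}},
\]
a weakly descending series whose effective exponents are $\tau_1=\sigma+\sigma_{\pi(1)}$ on the top variable and $\tau_i=\sigma_{\pi(i)}$ for $2\le i\le r$.

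It remains to pass from the weak ordering $\ge$ to the strict ordering $>$ required by Theorem~\ref{thm:MZVAC} and to verify the relevant partial-sum inequalities. For the first point I would split each weakly ordered cone into the finitely many strictly descending series obtained by recording which consecutive blocks of the $N_i$ coincide; collapsing each block merges its exponents additively and does not increase the depth, producing an ordinary multiple zeta series. For the second point, the key inequality is that for every $\pi$ and every $k$,
\[
   \sigma+\sum_{i=1}^{k}\sigma_{\pi(i)}\;\ge\;\sigma+\sum_{j=1}^{k}\sigma_j\;>\;k,
\]
since $\sigma_1\le\cdots\le\sigma_r$ are sorted, so the $k$ smallest real parts have the least sum among all $k$-element subsets.

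Consequently, for any block pattern the merged exponents of the first $p$ blocks sum to $\sigma+\sum_{i=1}^{k_p}\sigma_{\pi(i)}$, where $k_p\ge p$ is the number of original indices in those blocks; this quantity exceeds $k_p$, hence exceeds $p$, so the convergence criterion of Theorem~\ref{thm:MZVAC} is met for each resulting multiple zeta series. As there are only finitely many permutations and block patterns, the original series converges absolutely. I expect the only real subtlety to be the bookkeeping of the tie-breaking step, namely showing that collapsing equal blocks always leaves the partial-sum hypotheses intact, whereas the domination estimate $(m_1+\cdots+m_r)^{-\sigma}\ll N_1^{-\sigma}$ and the majorization inequality are routine.
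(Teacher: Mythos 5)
Your proof is correct and follows essentially the same route as the paper's: decompose the sum according to the ordering of the variables, compare $m_1+\cdots+m_r$ with the largest variable, invoke Theorem~\ref{thm:MZVAC} for each permutation, and use the fact that the sorted partial sums $\sigma+\sum_{j=1}^k\sigma_j$ minorize $\sigma+\sum_{i=1}^k\sigma_{\pi(i)}$ for every permutation $\pi$. If anything, your block-collapsing treatment of tied indices is \emph{more} complete than the paper's, which simply sets aside the equal-index cases with the unproved remark that ``these series converge under less stringent conditions''---your verification that merged exponents still satisfy the partial-sum hypotheses is exactly the justification that remark needs.
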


\noindent{\bf Proof.}  The summation indices $m_1,\dots,m_r$
in~\eqref{MTdef} obviously satisfy
\[
   \max \{m_j: 1\le j\le r\} \le \sum_{j=1}^r m_j \le r\max\{ m_j :
   1\le j\le r\}.
\]
Therefore, using the symbol $\asymp$ as a short-hand for ``has the
same order of magnitude as'', we have
\begin{equation}\label{permute}
   \sum_{m_1=1}^\infty \cdots\sum_{m_r=1}^\infty \bigg|
   \frac{1}{m_1^{s_1}\cdots m_r^{s_r}(m_1+\cdots+m_r)^s}\bigg|
   \asymp \sum_{\pi\in\mathfrak{S}_r}
   \sum_{m_{\pi(1)}>\cdots>m_{\pi(r)}>0}\;
   m_{\pi(1)}^{-\sigma}\prod_{j=1}^r m_{\pi(j)}^{-\sigma_{\pi(j)}},
\end{equation}
where the outer sum on the right is over all permutations $\pi$ of
$\{1,2,\dots,r\}$ and we have ignored all cases where there exists
an equality between two or more indices $m_j$ since these series
converge under less stringent conditions.   By
Theorem~\ref{thm:MZVAC}, the series on the right hand side
of~\eqref{permute} is absolutely convergent if for each permutation
$\pi$ and each positive integer $k$ such that $1\le k\le r$, we have
\[
   \sigma+\sum_{j=1}^k \sigma_{\pi(j)} >k.
\]
Since $\sigma_1\le \sigma_2\le\cdots\le \sigma_r$, this will clearly
be the case if for each positive integer $k$ such that $1\le k\le
r$, we have
\[
   \sigma+\sum_{j=1}^k \sigma_j>k.
\]
\eop

\section{Proof of Theorem~\ref{thm:MTtoMZV}}
Key to our proof of Theorem~\ref{thm:MTtoMZV} is the following
partial fraction decomposition.
\begin{Lem}\label{lem:parfrac}
Let $r$ and $s_1,s_2,\dots,s_r$ be positive integers, and let
$x_1,x_2,\dots,x_r$ be non-zero real numbers such that
$x:=x_1+x_2+\cdots+x_r\ne 0$. Then
\[
   \prod_{j=1}^r x_j^{-s_j}
   = \sum_{j=1}^r \bigg(\prod_{\substack{k=1\\k\ne j}}^r
  \sum_{a_k=0}^{s_k-1}\bigg)M_j\, x^{-s_j-A_j}\prod_{\substack{k=1\\k\ne j}}^r
  x_k^{a_k-s_k},
\]
where the multinomial coefficient
\[
   M_j := \frac{(s_j+A_j-1)!}{(s_j-1)!}\prod_{\substack{k=1\\k\ne
   j}}^r \frac{1}{a_k!}
   \qquad\mbox{and}\qquad
   A_j := \sum_{\substack{k=1\\k\ne j}}^r a_k.
\]
\end{Lem}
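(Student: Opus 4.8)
The plan is to establish the identity first under the extra hypothesis that $x_1,\dots,x_r$ are all \emph{positive}, and then to remove this restriction by a rational-function argument. When $x_j>0$ the Euler integral
\[
   x_j^{-s_j}=\frac{1}{(s_j-1)!}\int_0^\infty t_j^{s_j-1}e^{-x_j t_j}\,dt_j
\]
is valid, so that
\[
   \prod_{j=1}^r x_j^{-s_j}
   =\frac{1}{\prod_{m=1}^r (s_m-1)!}\int_{(0,\infty)^r}
   \prod_{j=1}^r t_j^{s_j-1}\,\exp\Big(-\sum_{j=1}^r x_j t_j\Big)\,dt_1\cdots dt_r.
\]
The asymmetry of the right-hand side of the Lemma, in which a single index $j$ is distinguished, suggests splitting the octant $(0,\infty)^r$ into the $r$ regions on which $t_j=\min_k t_k$; the locus where ties occur has measure zero and may be ignored.

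On the region where $t_j$ is minimal I would substitute $t_j=t$ and $t_k=t+u_k$ with $u_k>0$ for $k\ne j$. This shift has Jacobian $1$ and rewrites the linear form in the exponent as $\sum_k x_k t_k=x\,t+\sum_{k\ne j}x_k u_k$, thereby replacing the exponential by $e^{-xt}\prod_{k\ne j}e^{-x_k u_k}$. Expanding each factor $(t+u_k)^{s_k-1}=\sum_{a_k=0}^{s_k-1}\binom{s_k-1}{a_k}t^{a_k}u_k^{s_k-1-a_k}$ and then integrating separately in $t$ and in each $u_k$, using
\[
   \int_0^\infty t^{s_j-1+A_j}e^{-xt}\,dt=(s_j+A_j-1)!\,x^{-s_j-A_j},
   \qquad
   \int_0^\infty u_k^{s_k-1-a_k}e^{-x_k u_k}\,du_k=(s_k-1-a_k)!\,x_k^{-(s_k-a_k)},
\]
produces exactly a sum over $a_k\in\{0,\dots,s_k-1\}$ of terms proportional to $x^{-s_j-A_j}\prod_{k\ne j}x_k^{a_k-s_k}$, where $A_j=\sum_{k\ne j}a_k$, matching the shape of the $j$-th summand in the Lemma.

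The step that requires genuine care is the bookkeeping of the constants, and this is where I expect the only real obstacle to lie. I anticipate that the prefactor $1/\prod_m(s_m-1)!$, the gamma values $(s_j+A_j-1)!$ and $(s_k-1-a_k)!$, and the binomial coefficients $\binom{s_k-1}{a_k}$ will collapse to exactly $M_j$: using $\binom{s_k-1}{a_k}(s_k-1-a_k)!=(s_k-1)!/a_k!$ and cancelling $\prod_{k\ne j}(s_k-1)!$ against the prefactor leaves precisely $\dfrac{(s_j+A_j-1)!}{(s_j-1)!}\prod_{k\ne j}\dfrac{1}{a_k!}=M_j$. Summing the contributions of the $r$ minimal-coordinate regions then reproduces the asserted right-hand side verbatim.

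Finally, both sides of the identity are rational functions of $(x_1,\dots,x_r)$ whose singularities lie only on the hyperplanes $x_k=0$ and $x=0$, and the computation above shows they agree on the open set $\{x_k>0:1\le k\le r\}$, which has nonempty interior. A rational function vanishing on a nonempty open set is identically zero, so the two sides agree as rational functions; hence the identity holds for all nonzero real $x_1,\dots,x_r$ with $x\ne 0$, as required. The conceptual device that drives the whole argument is the decomposition of the domain of integration by the minimal coordinate, since it is precisely this that makes the distinguished index $j$ emerge naturally.
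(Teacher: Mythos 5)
Your proof is correct, but it takes a genuinely different route from the paper's. The paper proves the lemma purely algebraically: it applies the differential operator $\prod_{n=1}^r \frac{1}{(s_n-1)!}\left(-\partial/\partial x_n\right)^{s_n-1}$ to the trivial identity $\prod_{j=1}^r x_j^{-1} = \sum_{j=1}^r x^{-1}\prod_{k\ne j} x_k^{-1}$ (which is just $x=\sum_j x_j$ divided by $x\prod_j x_j$), and the Leibniz rule generates the multinomial coefficients $M_j$ directly; this works at once for all nonzero real $x_j$ with $x\ne 0$, with no positivity hypothesis and no continuation step. Your argument is essentially the Laplace-transform dual of this: representing each $x_j^{-s_j}$ by an Euler integral and producing powers of $t$ under the integral sign corresponds exactly to the paper's repeated differentiation of powers of $x$. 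What your approach buys is a conceptual explanation of where the distinguished index $j$ comes from --- it is the minimal coordinate of the integration octant --- and this mirrors the decomposition of the Mordell--Tornheim summation lattice by the ordering of the indices that the paper itself uses in the proof of Theorem~\ref{thm:MTAC}; what it costs is the auxiliary hypothesis $x_k>0$ and the closing rational-function argument needed to remove it, both of which the paper's differentiation avoids. Your bookkeeping is sound: the identity $\binom{s_k-1}{a_k}(s_k-1-a_k)! = (s_k-1)!/a_k!$ does collapse the Gamma factors and the prefactor $1/\prod_m(s_m-1)!$ to exactly $M_j$, and your final step is legitimate since the difference of the two sides, after clearing denominators, is a polynomial vanishing on the nonempty open set $(0,\infty)^r$ and hence identically zero.
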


\begin{proof} Applying the partial differential operator
\[
   \prod_{n=1}^r \frac{1}{(s_n-1)!}\bigg(-\frac{\partial}{\partial
   x_n}\bigg)^{s_n-1}
\]
to both sides of the trivial identity
\[
   \prod_{j=1}^r x_j^{-1} = \sum_{j=1}^r
   x^{-1}\prod_{\substack{k=1\\k\ne j}}^r
   x_k^{-1},\quad\mbox{with}\quad
   x:=\sum_{j=1}^r x_j
\]
yields
\begin{align*}
  \prod_{j=1}^r x_j^{-s_j} &= \sum_{j=1}^r
  \bigg\{\prod_{\substack{n=1\\n\ne j}}^r
  \frac{1}{(s_n-1)!}\bigg(-\frac{\partial}{\partial
  x_n}\bigg)^{s_n-1}\bigg\} x^{-s_j}\prod_{\substack{k=1\\k\ne j}}^r
  x_k^{-1}\\
  &= \sum_{j=1}^r \bigg(\prod_{\substack{k=1\\k\ne j}}^r
  \sum_{a_k=0}^{s_k-1}\bigg)\bigg(\frac{(s_j+A_j-1)!}{(s_j-1)!}\prod_{\substack{k=1\\k\ne
   j}}^r \frac{1}{a_k!} \bigg) x^{-s_j-A_j}\prod_{\substack{k=1\\k\ne j}}^r
  x_k^{a_k-s_k},
\end{align*}
as claimed.
\end{proof}

\noindent{\bf Proof of Theorem~\ref{thm:MTtoMZV}.} For $1\le l\le
r-1$, let
\[
   T_l(s_1,\dots,s_r) := \sum_{m_1=1}^\infty\dots\sum_{m_r=1}^\infty
   \bigg(\prod_{k=1}^l m_k^{-s_k}\bigg)\bigg(\prod_{k=l+1}^r
   n_k^{-s_k}\bigg),
   \quad\mbox{with}\quad n_k := \sum_{j=1}^k m_j.
\]
In Lemma~\ref{lem:parfrac}, let $x_j=m_j$, multiply both sides by
$n_r^{-s}$ and sum over all positive integers $m_j$ for $1\le j\le
r$. We find that
\begin{align}\label{step1}
   T(s_1,\dots,s_r;s) &= \sum_{j=1}^r
   \bigg(\prod_{\substack{k=1\\k\ne j}}^r \sum_{a_k=0}^{s_k-1}\bigg)
   M_j \sum_{m_1=1}^\infty\dots\sum_{m_r=1}^\infty
   n_r^{-s-s_j-A_j}\prod_{\substack{k=1\\k\ne j}}^r m_k^{a_k-s_k}\nonumber\\
   &=\sum_{j=1}^r
   \bigg(\prod_{\substack{k=1\\k\ne j}}^r \sum_{a_k=0}^{s_k-1}\bigg)
   M_j
   T_{r-1}(\Cat_{\substack{k=1\\k\ne j}}^r \{s_k-a_k\},s+s_j+A_j),
\end{align}
where
\[
   \Cat_{\substack{k=1\\k\ne j}}^r \{t_k\}
\]
abbreviates the concatenated argument sequence
$t_1,t_2,\dots,t_{j-1},t_{j+1},\dots,t_{r-1},t_r$.  Note that the
weight of $T$ in~\eqref{step1} is equal to the sum of the arguments
in $T_{r-1}$ on the right hand side.   Now apply
Lemma~\ref{lem:parfrac} with $r=l$, $x_j=m_j$, multiply both sides
by
\[
   \prod_{k=l+1}^r n_k^{-s_k}
\]
and sum over all positive integers $m_j$ for $1\le j\le r$. We find
that
\begin{align}\label{step2}
   T_l(s_1,\dots,s_r) &= \sum_{j=1}^l\bigg(\prod_{\substack{k=1\\k\ne
   j}}^l \sum_{a_k=0}^{s_k-1}\bigg)M_j \sum_{m_1=1}^\infty \cdots\sum_{m_r=1}^\infty\bigg(\prod_{\substack{k=1\\k\ne j}}^l
   m_k^{a_k-s_k}\bigg) n_l^{-s_j-A_j}\prod_{k=l+1}^r
   n_k^{-s_k}\nonumber\\
   &=\sum_{j=1}^l\bigg(\prod_{\substack{k=1\\k\ne j}}^l \sum_{a_k=0}^{s_k-1}\bigg)M_j
   T_{l-1}(\Cat_{\substack{k=1\\k\ne j}}^l \{s_k-a_k\},
   s_j+A_j,\Cat_{k=l+1}^r s_k).
\end{align}
Since
\[
   \sum_{\substack{k=1\\k\ne j}}^l (s_k-a_k)+s_j+A_j+\sum_{k=l+1}^r
   s_k = \sum_{k=1}^r s_k,
\]
the weight is preserved in~\eqref{step2}.  Since
\[
   T_1(s_1,\dots,s_r) = \sum_{m_1=1}^\infty\dots\sum_{m_r=1}^\infty m_1^{-s_1}\prod_{k=2}^r n_k^{-s_k}
   = \sum_{n_r>\cdots>n_1>0}^\infty\; \prod_{k=1}^r
   n_k^{-s_k} = \zeta(s_r,\dots,s_1),
\]
by induction the proof is complete. \eop

Before concluding, we note the following easy consequences of the
results proved in this section.

\begin{Cor}\label{cor:MultiEulerDecomp}
  Let $r-1$ and $s_j-1$ be positive integers for $1\le j\le r$. Let
  $M_j$ and $A_j$ be as in Lemma~\ref{lem:parfrac} and let $T_{r-1}$
  be as in the proof of Theorem~\ref{thm:MTtoMZV}.
  Then
  \[
     \prod_{j=1}^r \zeta(s_j) = \sum_{j=1}^r
     \bigg(\prod_{\substack{k=1\\k\ne j}}^r
     \sum_{a_k=0}^{s_k-1}\bigg) M_j
     T_{r-1}(\Cat_{\substack{k=1\\k\ne j}}^r \{s_k-a_k\}, s_j+A_j).
  \]
\end{Cor}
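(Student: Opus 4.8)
The plan is to recognize the claimed identity as nothing more than the special case $s=0$ of equation~\eqref{step1}, which was already derived in the proof of Theorem~\ref{thm:MTtoMZV}. Indeed, substituting $s=0$ into~\eqref{step1} turns the final argument $s+s_j+A_j$ of each $T_{r-1}$ into $s_j+A_j$, so the right-hand side coincides verbatim with that of the corollary. It then remains only to evaluate the left-hand side at $s=0$. From the defining series~\eqref{MTdef}, the factor $(m_1+\cdots+m_r)^{-s}$ is identically $1$ when $s=0$, so the multiple sum decouples:
\[
   T(s_1,\dots,s_r;0)
   = \sum_{m_1=1}^\infty\cdots\sum_{m_r=1}^\infty \prod_{j=1}^r m_j^{-s_j}
   = \prod_{j=1}^r \zeta(s_j),
\]
which is precisely the left-hand side of the corollary.

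First I would check that the passage from the pointwise partial fraction decomposition of Lemma~\ref{lem:parfrac} (with $x_j=m_j$) to the summed identity~\eqref{step1} remains legitimate when $s=0$. Here the hypotheses $r\ge 2$ and $s_j\ge 2$ are decisive. Because $a_k\le s_k-1$ forces $s_k-a_k\ge 1$, because $s_j+A_j\ge s_j\ge 2$, and because each multinomial coefficient $M_j$ is strictly positive, every summand on the right of the decomposition is a series of \emph{nonnegative} terms. Tonelli's theorem therefore permits the interchange of the finite partial-fraction sum with the infinite multiple summation over $m_1,\dots,m_r$; moreover, since the grand total equals the finite number $\prod_{j=1}^r\zeta(s_j)$, each individual $T_{r-1}$ series on the right is automatically convergent. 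Alternatively, one may invoke Theorem~\ref{thm:MTAC} directly, since $\sigma=0$ together with $\sigma_j=s_j\ge 2$ yields $\sum_{j=1}^k \sigma_j\ge 2k>k$ for every $k$ with $1\le k\le r$.

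I do not expect a genuine obstacle, as the argument is a direct specialization. The only point demanding attention is the routine bookkeeping confirming that the arguments of $T_{r-1}$ are distributed correctly: for each fixed $j$ the index $m_j$ enters solely through the complete partial sum $n_r=m_1+\cdots+m_r$, so that the remaining indices $m_k$ with $k\ne j$ supply the $r-1$ leading arguments $s_k-a_k$, while $n_r$ supplies the final argument $s_j+A_j$. This matching is inherited intact from~\eqref{step1}, so no separate verification is needed, and the corollary follows at once.
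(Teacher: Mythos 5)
Your proof is correct and is essentially the paper's own argument: the paper proves the corollary by summing both sides of Lemma~\ref{lem:parfrac} over all positive integers $x_1,\dots,x_r$, which is precisely what your specialization of~\eqref{step1} to $s=0$ amounts to, since the weight factor $n_r^{-s}$ is identically $1$ there. Your additional Tonelli/positivity justification for interchanging the summations (and for the convergence of each $T_{r-1}$ series) is a detail the paper leaves implicit, but it does not change the route.
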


\noindent{\bf Proof.} Sum both sides of Lemma~\ref{lem:parfrac} over
all positive integers $x_1,\dots,x_r$.  \eop

Note that when $r=2$, Corollary~\ref{cor:MultiEulerDecomp} reduces
to Euler's decomposition~\cite{qEulerDecomp}
\[
   \zeta(s)\zeta(t) =
   \sum_{a=0}^{s-1}\binom{a+t-1}{t-1}\zeta(t+a,s-a)+\sum_{a=0}^{t-1}\binom{a+s-1}{s-1}\zeta(s+a,t-a).
\]

\begin{Cor}[Corollary 4.2 in~\cite{Mord}] For any positive integers $r$ and $s$,
\begin{equation*}
  T(\underbrace{1, 1,\dots, 1}_r ; s)=r!\, \zeta(s+1,\underbrace{1,\dots, 1}_{r-1}).
\end{equation*}
\end{Cor}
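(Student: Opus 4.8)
The plan is to specialize the identities \eqref{step1} and \eqref{step2} from the proof of Theorem~\ref{thm:MTtoMZV} to the case $s_1=\cdots=s_r=1$, where everything collapses dramatically. The crucial observation is that when every $s_k=1$, each inner sum $\sum_{a_k=0}^{s_k-1}$ reduces to the single term $a_k=0$. Hence $A_j=\sum_{k\ne j}a_k=0$ and the multinomial coefficient $M_j=\frac{(s_j+A_j-1)!}{(s_j-1)!}\prod_{k\ne j}\frac{1}{a_k!}=\frac{0!}{0!}\cdot 1=1$ for every $j$.

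First I would apply this to \eqref{step1}. With all $s_k=1$, each of the $r$ summands reduces to $T_{r-1}(\underbrace{1,\dots,1}_{r-1},s+1)$ (the $r-1$ surviving arguments $s_k-a_k=1$ together with the final argument $s+s_j+A_j=s+1$), and since these $r$ summands are identical,
\[
   T(\underbrace{1,\dots,1}_{r};s)=r\,T_{r-1}(\underbrace{1,\dots,1}_{r-1},s+1).
\]

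Next I would iterate \eqref{step2}. I claim that for each $l$ with $2\le l\le r-1$,
\[
   T_l(\underbrace{1,\dots,1}_{r-1},s+1)=l\,T_{l-1}(\underbrace{1,\dots,1}_{r-1},s+1).
\]
Indeed, the first $l$ arguments of $T_l(\underbrace{1,\dots,1}_{r-1},s+1)$ are all equal to $1$, so once more every $a_k=0$, $A_j=0$, and $M_j=1$; each of the $l$ summands in \eqref{step2} then produces the same argument string, namely the remaining $l-1$ leading ones, the newly created argument $s_j+A_j=1$, and the unchanged tail $\Cat_{k=l+1}^{r}s_k$, which together again read $(\underbrace{1,\dots,1}_{r-1},s+1)$. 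Telescoping this relation from $l=r-1$ down to $l=2$ gives
\[
   T_{r-1}(\underbrace{1,\dots,1}_{r-1},s+1)=(r-1)!\,T_1(\underbrace{1,\dots,1}_{r-1},s+1).
\]

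Finally, I would invoke the explicit evaluation $T_1(s_1,\dots,s_r)=\zeta(s_r,\dots,s_1)$ established in the proof of Theorem~\ref{thm:MTtoMZV}. Reading off the arguments in reverse order gives $T_1(\underbrace{1,\dots,1}_{r-1},s+1)=\zeta(s+1,\underbrace{1,\dots,1}_{r-1})$, and combining the three displays yields
\[
   T(\underbrace{1,\dots,1}_{r};s)=r\cdot(r-1)!\,\zeta(s+1,\underbrace{1,\dots,1}_{r-1})=r!\,\zeta(s+1,\underbrace{1,\dots,1}_{r-1}),
\]
as desired (the cases $r=1,2$ being immediate or requiring no use of \eqref{step2}). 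The only step requiring any care is the bookkeeping: confirming that the collapse of the partial-fraction sums at each stage leaves the argument string $(\underbrace{1,\dots,1}_{r-1},s+1)$ invariant, and that the $j$-sum contributes exactly the factor equal to the current value of $l$, so that the accumulated factors multiply to $r\cdot(r-1)!=r!$.
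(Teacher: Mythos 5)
Your proposal is correct, and it is precisely the argument the paper intends: the corollary is stated there as an ``easy consequence'' of the section's results, namely the specialization of \eqref{step1}, \eqref{step2}, and the evaluation $T_1(s_1,\dots,s_r)=\zeta(s_r,\dots,s_1)$ to the case $s_1=\cdots=s_r=1$, which is exactly what you carry out. Your bookkeeping (each $a_k=0$, $A_j=0$, $M_j=1$, so the $j$-sums contribute the factors $r,(r-1),\dots,2$ multiplying to $r!$) is accurate, including the edge cases $r=1,2$.
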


Now combining equations (30) and (31) in~\cite{BBB}, we have
\begin{align*}
 T(\underbrace{1, 1,\dots, 1}_r ; 1) &= r!\,\zeta(2,\underbrace{1,\dots, 1}_{r-1})=r!\,\zeta(r+1)\\
 \intertext{and}
 T(\underbrace{1, 1,\dots, 1}_r ; 2) &= r!\,\zeta(3,\underbrace{1,\dots, 1}_{r-1})
  =r!\,\bigg\{\frac{r+1}{2}\zeta(r+2)-\frac{1}{2}\sum_{k=1}^{r-1}\zeta(k+1)\zeta(r+1-k)\bigg\}.
\end{align*}

\section{Parity Results}

In the introductory section, we alluded to the following parity
result for multiple zeta values due to Tsumura~\cite{Tsu02} and for
which an an independent proof is given in~\cite{IKZ}.
\begin{Thm}[Tsumura~\cite{Tsu02}, also Ihara et al~\cite{IKZ}]\label{thm:MZVtoMZVred}
   Every multiple zeta value of depth at least two and with weight and depth of opposite
   parity can be expressed as a rational linear combination of
   products of multiple zeta values of lower depth.
\end{Thm}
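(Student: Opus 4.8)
The plan is to derive this parity reduction from the double shuffle relations, inducting on the depth $r$. There are two bilinear product structures on multiple zeta values. The harmonic (stuffle) product comes directly from the series~\eqref{MZVdef}: a product $\zeta(u)\zeta(v)$ is expanded by splitting the double summation region according to the relative order of the two index blocks, yielding a $\mathbf{Q}$-linear combination of multiple zeta values of weight $|u|+|v|$. The shuffle product comes from the representation of each multiple zeta value as an iterated integral over the simplex $1>t_1>\cdots>t_w>0$ with the one-forms $dt/t$ and $dt/(1-t)$; multiplying two such integrals and decomposing the product simplex gives a second such combination. Subtracting the two expansions of the same product produces homogeneous linear relations in a fixed weight; the divergent boundary cases are incorporated through the standard regularization with a parameter and the comparison map between the two regularizations, as in~\cite{IKZ}.

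First I would dispose of the base case $r=2$, which is essentially classical and already present in this paper. Euler's decomposition, displayed just after Corollary~\ref{cor:MultiEulerDecomp}, coincides with the shuffle expansion of $\zeta(s)\zeta(t)$, while the stuffle expansion is $\zeta(s)\zeta(t)=\zeta(s,t)+\zeta(t,s)+\zeta(s+t)$. Fixing the weight $w=s+t$ and letting $(s,t)$ range over all admissible splittings gives a linear system with binomial coefficients $\binom{i-1}{s-1}+\binom{i-1}{t-1}$. A rank computation shows that when $w$ is odd --- equivalently, when weight and depth have opposite parity, since the depth here is $2$ --- every double zeta value is a rational combination of the products $\zeta(i)\zeta(w-i)$ and of $\zeta(w)$; the distinguishing sign $(-1)^{i}$ between the two binomial terms cooperates with the argument-reversal symmetry $\zeta(a,b)\leftrightarrow\zeta(b,a)$ exactly in the odd-weight case. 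This recovers the result of Tornheim~\cite{Torn} noted in the introduction.

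For the inductive step I would work in the depth filtration, treating two multiple zeta values as equivalent when they differ by products of multiple zeta values of strictly smaller depth --- which is exactly the kind of expression allowed on the right-hand side of the conclusion. Modulo this equivalence, the stuffle and shuffle products of lower-depth values contribute to weight $w$ and depth $r$ only through their interleaving terms, so the double shuffle relations descend to linear relations on the finite-dimensional space spanned by the depth-$r$, weight-$w$ multiple zeta values. On this depth-graded space the argument-reversal involution $\zeta(s_1,\dots,s_r)\mapsto\zeta(s_r,\dots,s_1)$ acts, its symmetric eigenspace being manifestly reducible via the stuffle product; the descended relations then equate a value with its reversal up to a sign controlled by $(-1)^{w}$ and the depth $r$. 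When $w$ and $r$ have opposite parity this sign forces each individual value into the reducible symmetric span, and the induction closes.

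The hard part will be the rank computation in the inductive step: one must show that the depth-graded double shuffle relations, together with the reversal involution, actually suffice to isolate every opposite-parity multiple zeta value, rather than merely constraining their span. In depth $2$ this is the short binomial calculation above, but in general it requires careful control of the interleaving coefficients, most cleanly organized through generating functions and the derivation relations used in~\cite{IKZ}; Tsumura's original argument~\cite{Tsu02} reaches the same reduction by a more explicit, computational route. Either way the whole difficulty is concentrated in checking that it is precisely the opposite-parity case for which the relevant sign makes the reduction nondegenerate --- when weight and depth have the same parity the analogous system is rank-deficient and no such reduction is expected.
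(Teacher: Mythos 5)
First, a point of orientation: the paper does not prove Theorem~\ref{thm:MZVtoMZVred} at all. It is imported verbatim from Tsumura~\cite{Tsu02} (with an independent proof in Ihara--Kaneko--Zagier~\cite{IKZ}) and used as a black box, together with Theorem~\ref{thm:MTtoMZV}, to deduce Corollary~\ref{cor:MTtoMZVred}. So your attempt cannot be measured against an in-paper argument; it has to stand on its own as a reconstruction of the cited proofs.

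Judged that way, you have correctly identified the toolkit of~\cite{IKZ} --- regularized double shuffle relations, the depth filtration, the reversal involution, and a sign governed by $(-1)^w$ versus $(-1)^r$ --- but what you have written is a plan, not a proof, and one of its load-bearing assertions is false as stated. The claim that the symmetric eigenspace of the reversal involution is ``manifestly reducible via the stuffle product'' holds only in depth $2$, where $\zeta(a,b)+\zeta(b,a)=\zeta(a)\zeta(b)-\zeta(a+b)$; for $r\ge 3$ the stuffle product only makes the sum over all $r!$ permutations of the arguments manifestly reducible, and the two-term sum $\zeta(s_1,\dots,s_r)+\zeta(s_r,\dots,s_1)$ is not obviously of that form --- controlling it is part of the theorem, not an input to it. Likewise, the relation you invoke, that modulo products and lower depth a value equals its reversal up to the signs $(-1)^w$ (shuffle side) and $(-1)^r$ (stuffle side), is the shuffle/quasi-shuffle antipode identity; when one writes it out, the stuffle antipode produces, besides genuine products, \emph{single} multiple zeta values of weight $w$ and depth $r-k$ obtained by contracting $k$ adjacent arguments. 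For odd $k$ these have depth of the \emph{same} parity as $w$, so your induction hypothesis says nothing about them and they cannot be absorbed; disposing of them, together with the regularization corrections on the shuffle side (the reversed word is not admissible), is precisely the content of the generating-function computation in~\cite{IKZ} and of Tsumura's analytic argument in~\cite{Tsu02}. Your closing paragraph concedes that this ``rank computation'' is the hard part and defers it to the very papers being cited; since that is where the entire difficulty of the theorem lives, the proposal does not constitute a proof. (Your base case $r=2$ --- Euler's decomposition played against the stuffle expansion in odd weight --- is fine and classical.)
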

Clearly, Corollary~\ref{cor:MTtoMZVred} is an immediate consequence
of Theorem~\ref{thm:MZVtoMZVred} and our Theorem~\ref{thm:MTtoMZV}.
Alternatively, we can prove Corollary~\ref{cor:MTtoMZVred} by
employing instead a recent parity result of Tsumura~\cite{TsuProc}
for Mordell-Tornheim zeta values:

\begin{Thm}[Tsumura~\cite{TsuProc}]\label{thm:MTtoMTred}
   Every Mordell-Tornheim zeta value of depth at least two and with weight and depth of opposite
   parity can be expressed as a rational linear combination of
   products of Mordell-Tornheim zeta values of lower depth.
\end{Thm}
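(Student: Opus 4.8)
The plan is to prove Theorem~\ref{thm:MTtoMTred} analytically, by an induction on the depth $r$ in which each step peels off one summation variable and lowers the depth by one, the base case $r=2$ being Tornheim's classical parity theorem~\cite{Torn}. A purely algebraic deduction from the results already established here is tempting but incomplete: chaining Theorem~\ref{thm:MTtoMZV} with Theorem~\ref{thm:MZVtoMZVred} shows that a Mordell--Tornheim zeta value of opposite parity is a rational combination of \emph{products of multiple zeta values} of lower depth, but we have proved no converse expressing a multiple zeta value through Mordell--Tornheim values. Since the multiple zeta values occurring in those products genuinely have depth $\ge 2$ once $r\ge 3$, and such values are not Mordell--Tornheim values, this route does not by itself deliver products of \emph{lower-depth Mordell--Tornheim values}. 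Hence I would argue directly.

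First I would analytically continue $T(s_1,\dots,s_r;s)$ in all its arguments by means of the Mellin--Barnes identity
\[
   \frac{1}{(m_1+\cdots+m_r)^s}=\frac{1}{2\pi i}\int_{(c)}\frac{\Gamma(s+z)\,\Gamma(-z)}{\Gamma(s)}\,(m_1+\cdots+m_{r-1})^{z}\,m_r^{-s-z}\,dz,
\]
valid for a suitable vertical contour $(c)$. Summing over $m_r$ turns the last factor into $\zeta(s+z)$, while summing the remaining variables against $(m_1+\cdots+m_{r-1})^{z}$ produces a depth-$(r-1)$ Mordell--Tornheim function, giving the functional relation
\[
   T(s_1,\dots,s_r;s)=\frac{1}{2\pi i}\int_{(c)}\frac{\Gamma(s+z)\,\Gamma(-z)}{\Gamma(s)}\,\zeta(s+z)\,T(s_1,\dots,s_{r-1};-z)\,dz.
\]
This exhibits the depth-$r$ value as a contour integral of a depth-$(r-1)$ object, which is exactly the structure an induction on $r$ needs.

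Next I would move the contour to the right and collect residues. The pole of $\zeta(s+z)$ at $z=1-s$ and the poles of $\Gamma(-z)$ at the non-negative integers contribute, respectively, a depth-$(r-1)$ Mordell--Tornheim value and products of the depth-$(r-1)$ function with Riemann zeta values, each carrying an explicit rational coefficient read off from the Taylor expansion of $\Gamma(s+z)/\Gamma(s)$; applying the induction hypothesis to these lower-depth pieces expresses them through products of still-lower-depth Mordell--Tornheim values. The shifted integral is a reflected copy of the original, and the functional equation of the Riemann zeta function identifies it, up to a sign of the form $(-1)^{w+r}$, with $T(s_1,\dots,s_r;s)$ itself. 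When the weight $w$ and the depth $r$ have opposite parity this sign is $-1$, so the original and the reflected integral cancel and $T$ is left equal to the residue sum, that is, to a rational linear combination of products of lower-depth Mordell--Tornheim values.

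The main obstacle is analytic rather than combinatorial. The delicate points are: (i) justifying the contour shifts and the interchange of summation and integration, which requires Stirling-type bounds on $\Gamma$ along vertical lines so that the horizontal segments drop out and the series converge uniformly; (ii) verifying that the residues reassemble, with exactly the right multinomial coefficients, into genuine lower-depth Mordell--Tornheim values rather than into some wider class of multiple series; and (iii) pinning down the precise reflection sign $(-1)^{w+r}$, since it is this sign, together with the opposite-parity hypothesis, that turns a mere functional relation into an outright reduction. Managing the bookkeeping uniformly across all the arguments, so that the induction hypothesis applies cleanly to each depth-$(r-1)$ piece, is where the bulk of the effort would lie.
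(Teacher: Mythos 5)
You should know at the outset that the paper contains no proof of this statement: it is Tsumura's theorem, quoted from~\cite{TsuProc}, and it appears in the paper only as an alternative input (alongside Theorem~\ref{thm:MTtoMZV}) for re-deriving Corollary~\ref{cor:MTtoMZVred}. Your opening diagnosis is exactly right and mirrors the paper's own logic: chaining Theorem~\ref{thm:MTtoMZV} with Theorem~\ref{thm:MZVtoMZVred} produces products of lower-depth \emph{multiple zeta values}, and since no converse reduction of multiple zeta values to Mordell--Tornheim sums is proved, that route cannot yield the present statement. So you are attempting a from-scratch proof of a cited theorem, and your sketch must stand on its own. It does not.

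There are three concrete gaps. First, a small one: after summing over $m_r$ the Mellin--Barnes kernel produces $\zeta(s+s_r+z)$, not $\zeta(s+z)$, because the factor $m_r^{-s_r}$ is still present; correspondingly the zeta pole sits at $z=1-s-s_r$. Second, a serious one: when you shift the contour, the residues at the poles of $\Gamma(-z)$ at $z=n\ge 0$ involve $T(s_1,\dots,s_{r-1};-n)$, i.e.\ the \emph{analytically continued} Mordell--Tornheim function at a non-positive integer argument. These are not Mordell--Tornheim zeta values in the sense of the theorem (which requires positive integer arguments), and such points can lie on or near the singular locus of the continued function, so your induction hypothesis simply does not apply to them; moreover the integrand's factor $T(s_1,\dots,s_{r-1};-z)$ is itself meromorphic in $z$ and contributes its own poles under the shift, which you never account for. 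Reducing these special values to admissible lower-depth data is a substantial part of any genuine proof, not bookkeeping. Third, and fatal as stated: the assertion that the shifted integral ``is a reflected copy of the original'' equal to $(-1)^{w+r}\,T(s_1,\dots,s_r;s)$ is exactly the content of the parity theorem, and you never derive it. The functional equation of $\zeta$ introduces gamma factors and a factor $\sin(\pi s/2)$, not a bare sign, and pushing the contour rightward yields a residue expansion plus a remainder integral; nothing in your setup identifies that remainder with $\pm T$ itself. Tsumura's actual argument in~\cite{TsuProc} is indeed analytic, but it rests on functional relations established by quite different and considerably more delicate means, precisely because the shortcut you assume in this step is not available.
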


Corollary~\ref{cor:MTtoMZVred} is clearly also an immediate
consequence of Tsumura's Theorem~\ref{thm:MTtoMTred} and our
Theorem~\ref{thm:MTtoMZV}.

\section*{Acknowledgments}
Thanks are due to the referee for valuable suggestions concerning
emphasis and organization.  Research of the second author was
supported by the National Natural Science Foundation of China,
Project 10871169.

\end{document}